\documentclass{amsart}



\newcommand{\Weak}{{\rm Weak-}}

\usepackage{amsthm}
\usepackage{pgf,tikz}
\usepackage{amsmath}
\usepackage{amsfonts}
\usepackage{amssymb}
\usepackage{graphicx}%

\usetikzlibrary{arrows}
\usepackage{mathptmx}
\usepackage{helvet}
\usepackage{courier}
\usepackage{type1cm}

\usepackage{multicol}
\usepackage[bottom]{footmisc}

\setcounter{MaxMatrixCols}{30}

\newtheorem{theorem}{Theorem}
\newtheorem{corollary}[theorem]{Corollary}

\newtheorem{lemma}[theorem]{Lemma}

\providecommand{\U}[1]{\protect\rule{.1in}{.1in}}
\usetikzlibrary{arrows}
\allowdisplaybreaks

\begin{document}

\title[Integer points in translated domains]{$L^{p}$ and $\Weak L^{p}$ estimates for the number of integer
points in translated domains}

\author[L. Brandolini]{Luca Brandolini}

\address{Dipartimento di Ingegneria Gestionale, dell'Informazione e della Produzione,
                Universit\`{a} di Bergamo,
                Viale Marconi 5,
                 24044 Dalmine (BG), Italy.}
\email{luca.brandolini@unibg.it}  
             
            \author[L. Colzani]{Leonardo Colzani}
            \address{Dipartimento di Matematica e Applicazioni,
               Edificio U5,
               Universit\`{a} di Milano Bicocca,
               Via R.Cozzi 53, 20125 Milano, Italy.}
\email               {leonardo.colzani@unimib.it}
          
                \author[G. Gigante]{Giacomo Gigante} 
\address{                Dipartimento di Ingegneria Gestionale, dell'Informazione e della Produzione,
                Universit\`{a} di Bergamo,
                Viale Marconi 5,
                 24044 Dalmine (BG), Italy.}              
                 \email{giacomo.gigante@unibg.it}
                
                \author[G. Travaglini]{Giancarlo Travaglini}
\address {Dipartimento di Statistica e Metodi Quantitativi,
                 Edificio U7,
                 Universit\`{a} di Milano-Bicocca,
                 Via Bicocca degli Arcimboldi 8,
                 20126 Milano, Italy.}
\email      {giancarlo.travaglini@unimib.it}

\begin{abstract}
Revisiting and extending a recent result of M.Huxley, we estimate the
$L^{p}\left(  \mathbb{T}^{d}\right)  $ and $\Weak L^{p}\left(
\mathbb{T}^{d}\right)  $ norms of the discrepancy between the volume
and the number of integer points in translated domains.

{\bf Keywords}: {Lattice points, Discrepancy.}
\end{abstract}

\subjclass[2010]{11H06, 52C07}

\maketitle

In this paper we estimate different norms of the discrepancy between the
volume and the number of integer points in dilated and translated copies
$R\Omega-x$ of a bounded convex domain $\Omega\subset\mathbb{R}^{d}$ having
positive measure. The above number of integer points is a periodic function of
the translation variable $x$, with Fourier expansion
\begin{align*}
\sum_{k\in\mathbb{Z}^{d}}\chi_{R\Omega-x}(k)  &  =\sum_{n\in\mathbb{Z}^{d}%
}\left(  \int_{\mathbb{T}^{d}}\sum_{k\in\mathbb{Z}^{d}}\chi_{R\Omega-y}%
(k)\exp(-2\pi iny)dy\right)  \exp(2\pi inx)\\
&  =\sum_{n\in\mathbb{Z}^{d}}\left(  \int_{\mathbb{R}^{d}}\chi_{R\Omega
}(y)\exp(-2\pi iny)dy\right)  \exp(2\pi inx)\\
&  =\sum_{n\in\mathbb{Z}^{d}}R^{d}\widehat{\chi_{\Omega}}(Rn)\exp(2\pi inx).
\end{align*}
The last equality is in the $L^{2}$ sense.
It follows that the discrepancy function
\[
\mathcal{D}\left(  R\Omega-x\right)  =\sum_{k\in\mathbb{Z}^{d}}\chi
_{R\Omega-x}(k)-R^{d}\left\vert \Omega\right\vert
\]
has Fourier expansion%
\[
\sum_{n\in \mathbb{Z}^{d}\setminus \{0\} }R^{d}\widehat{\chi_{\Omega}}(Rn)\exp(2\pi
inx).
\]

If $\Omega$ is a bounded convex domain in $\mathbb{R}^{d}$ with smooth
boundary having positive Gaussian curvature then%
\[
\left\vert \widehat{\chi_{\Omega}}(\xi)\right\vert \leqslant C\left\vert
\xi\right\vert ^{-\left(  d+1\right)  /2}.
\]
See \cite[Chapter 8]{S}. Kendall \cite{K} observed that the Fourier expansion of the discrepancy and
the above estimate for the Fourier transform of a convex domain give %
\[
\left\{  \int_{\mathbb{T}^{d}}\left\vert \mathcal{D}\left(  R\Omega-x\right)
\right\vert ^{2}dx\right\}  ^{1/2}\leqslant CR^{\left(  d-1\right)  /2}.
\]

Using a smoothing argument and the Poisson summation formula, Herz \cite{H}
and Hlawka \cite{Hl} (see also \cite{T}) proved that%
\[
\sup_{x\in\mathbb{T}^{d}}\left\{  \left\vert \mathcal{D}\left(  R\Omega
-x\right)  \right\vert \right\}  \leqslant CR^{d(d-1)/(d+1)}.
\]

Interpolating the above two upper bounds between $L^{2}$ and $L^{\infty}$
gives a poor estimate. Indeed when $d=2$ interpolation gives%
\[
\left\{  \int_{\mathbb{T}^{2}}\left\vert \mathcal{D}\left(  R\Omega-x\right)
\right\vert ^{p}dx\right\}  ^{1/p}\leqslant CR^{\left(  2p-1\right)  /\left(
3p\right)  },
\]
while M.Huxley \cite{Hu} has recently showed a more
interesting estimate: If $\Omega$ is a planar convex body having
boundary with continuous and positive curvature then%
\[
\left\{  \int_{\mathbb{T}^{2}}\left\vert \mathcal{D}\left(  R\Omega-x\right)
\right\vert ^{4}dx\right\}  ^{1/4}\leqslant CR^{1/2}\log^{1/4}\left(
R\right)  .
\]
That is, the upper estimate for the $L^{2}$ discrepancy extends, up to a logarithm, to $L^{4}$.

Huxley's proof seems to be tailored for the planar case and for the exponent
$p=4$, where one can apply Parseval equality to the square of the discrepancy
function. Huxley also asked for an analog of his result for $d>2$. 
Here we will give a possible answer and our approach will be to obtain $L^p$ results through $\Weak L^p$ techniques.

We recall that the spaces $L^{p}\left(  \mathbb{X},\mu\right)  $ and
$\Weak L^{p}\left(  \mathbb{X},\mu\right)  $, $0<p<+\infty$, are defined by the
quasi norms
\begin{gather*}
\left\Vert f\right\Vert _{L^{p}\left(  \mathbb{X},\mu\right)  }=\left\{
\int_{\mathbb{X}}\left\vert f\left(  x\right)  \right\vert ^{p}d\mu\left(
x\right)  \right\}  ^{1/p},\\
\left\Vert f\right\Vert _{\Weak L^{p}\left(  \mathbb{X},\mu\right)  }%
=\sup_{t>0}\left\{  t^{p}\mu\left\{  x\in\mathbb{X},\ \left\vert f\left(
x\right)  \right\vert >t\right\}  \right\}  ^{1/p}.
\end{gather*}

The space $\Weak L^{p}\left(  \mathbb{X},\mu\right)  $ is the case $q=+\infty$
of the Lorentz spaces $L^{p,q}\left(  \mathbb{X},\mu\right)  $ (see e.g.
\cite[Chapter 1, \S 3]{B-L} or \cite[Chapter 5, \S 3]{SW}). Finally, the space
$L^{\infty}\left(  \mathbb{X},\mu\right)  $ is defined by the norm
\[
\left\Vert f\right\Vert _{L^{\infty}\left(  \mathbb{X},\mu\right)  }%
=\inf\left\{  t>0:\ \mu\left\{  x\in\mathbb{X}:\ \left\vert f\left(  x\right)
\right\vert >t\right\}  =0\right\}  .
\]

In what follows $\left(  \mathbb{X},\mu\right)  $ will be the torus
$\mathbb{T}^{d}$ or the integers $\mathbb{Z}^{d}$ with the respective
translation invariant measures.

If $\mathbb{X}$ has finite measure and $p<s$, then both $L^{p}\left(
\mathbb{X},\mu\right)  $ and $L^{s}\left(  \mathbb{X},\mu\right)  $ are
intermediate between $L^{\infty}\left(  \mathbb{X},\mu\right)  $ and
$\Weak L^{p}\left(  \mathbb{X},\mu\right)  $:
\[
L^{\infty}\left(  \mathbb{X},\mu\right)  \subseteq L^{s}\left(  \mathbb{X}%
,\mu\right)  \subseteq L^{p}\left(  \mathbb{X},\mu\right)  \subseteq
\Weak L^{p}\left(  \mathbb{X},\mu\right)  .
\]
The following is a quantitative counterpart of these inclusions.

\begin{lemma}
\label{Key Lemma}(1) \textit{If }$\mu\left(  \mathbb{X}\right)  =1$\textit{,
then}%
\[
\left\Vert f\right\Vert _{L^{p}\left(  \mathbb{X},\mu\right)  }^{p}%
\leqslant1+p\left\Vert f\right\Vert _{\Weak L^{p}\left(  \mathbb{X},\mu\right)
}^{p}\log_{+}\left(  \left\Vert f\right\Vert _{L^{\infty}\left(
\mathbb{X},\mu\right)  }\right)  .
\]

(2) If $p<s<+\infty$, then%
\[
\left\Vert f\right\Vert _{L^{s}\left(  \mathbb{X},\mu\right)  }^{s}%
\leqslant\frac{s}{s-p}\left\Vert f\right\Vert _{\Weak L^{p}\left(
\mathbb{X},\mu\right)  }^{p}\left\Vert f\right\Vert _{L^{\infty}\left(
\mathbb{X},\mu\right)  }^{s-p}.
\]

\end{lemma}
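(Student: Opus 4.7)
The plan is to derive both bounds from the layer cake identity
\[
\left\Vert f\right\Vert_{L^{p}(\mathbb{X},\mu)}^{p}=p\int_{0}^{+\infty}t^{p-1}\,\mu\left\{ x\in\mathbb{X}:\left\vert f(x)\right\vert >t\right\} dt,
\]
together with the two elementary bounds on the distribution function available under our hypotheses: the weak-type inequality
\[
\mu\left\{ \left\vert f\right\vert >t\right\} \leqslant \left\Vert f\right\Vert _{\Weak L^{p}}^{p}\,t^{-p},
\]
valid for every $t>0$, and the truncation $\mu\{|f|>t\}=0$ whenever $t>M:=\left\Vert f\right\Vert _{L^{\infty}}$. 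Replacing $p$ by $s$ in the layer cake gives the analogous identity for the $L^{s}$ norm.

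For part (2), I would truncate the integral at $M$ and use the weak-$L^{p}$ bound throughout $(0,M]$. This yields
\[
\left\Vert f\right\Vert _{L^{s}}^{s}=s\int_{0}^{M}t^{s-1}\mu\{\left\vert f\right\vert >t\}\,dt\leqslant s\left\Vert f\right\Vert _{\Weak L^{p}}^{p}\int_{0}^{M}t^{s-1-p}\,dt=\frac{s}{s-p}\left\Vert f\right\Vert _{\Weak L^{p}}^{p}M^{s-p},
\]
which is the claim. Convergence of the last integral at $0$ is exactly the assumption $s>p$.

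For part (1), the same strategy fails because the weak bound yields a non-integrable $t^{-1}$ near the origin, so the idea is to split the layer cake at the threshold $t=1$. On $[0,1]$ use $\mu\{|f|>t\}\leqslant \mu(\mathbb{X})=1$, producing the contribution $p\int_{0}^{1}t^{p-1}\,dt=1$. On $[1,M]$, which is empty precisely when $M\leqslant 1$, use the weak bound to obtain $p\left\Vert f\right\Vert _{\Weak L^{p}}^{p}\int_{1}^{M}t^{-1}\,dt=p\left\Vert f\right\Vert _{\Weak L^{p}}^{p}\log M$. Summing the two contributions reproduces the stated inequality, with the $\log_{+}$ arising automatically from the fact that the second region is empty when $M\leqslant 1$. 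There is no real obstacle beyond choosing the cutoff at $1$ so that the breaking point matches the $\log_{+}$ in the conclusion; any other cutoff would produce a worse constant or an inhomogeneity in the logarithm.
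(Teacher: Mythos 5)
Your proposal is correct and follows essentially the same route as the paper: the layer cake formula combined with the three bounds on the distribution function (trivial bound by $\mu(\mathbb{X})=1$, the weak-$L^{p}$ bound, and vanishing above $\left\Vert f\right\Vert _{L^{\infty}}$), splitting at $t=1$ for part (1) and truncating at $\left\Vert f\right\Vert _{L^{\infty}}$ for part (2). The only cosmetic difference is that the paper disposes of the case $\left\Vert f\right\Vert _{L^{\infty}}\leqslant 1$ separately at the outset, whereas you absorb it into the observation that the region $[1,M]$ is then empty; both are fine.
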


\begin{proof}
(1) If $\left\Vert f\right\Vert _{L^{\infty}\left(  \mathbb{X},\mu\right)
}\leqslant1$, then also $\left\Vert f\right\Vert _{L^{p}\left(  \mathbb{X}%
,\mu\right)  }\leqslant1$ and the lemma follows. Otherwise,
\begin{align*}
\mu\left\{  x\in\mathbb{X},\ \left\vert f\left(  x\right)  \right\vert
>t\right\}   &  \leqslant\mu\left\{  \mathbb{X}\right\}  =1,\\
\mu\left\{  x\in\mathbb{X},\ \left\vert f\left(  x\right)  \right\vert
>t\right\}   &  \leqslant\left\Vert f\right\Vert _{\Weak L^{p}\left(
\mathbb{X},\mu\right)  }^{p}t^{-p},\\
\mu\left\{  x\in\mathbb{X},\ \left\vert f\left(  x\right)  \right\vert
>t\right\}   &  =0\quad\text{if }t\geqslant\left\Vert f\right\Vert
_{L^{\infty}\left(  \mathbb{X},\mu\right)  }.
\end{align*}
Hence,
\begin{align*}
\left\Vert f\right\Vert _{L^{p}\left(  \mathbb{X},\mu\right)  }^{p}  &
=\int_{0}^{+\infty}pt^{p-1}\mu\left\{  x\in\mathbb{X},\ \left\vert f\left(
x\right)  \right\vert >t\right\}  dt\\
&  \leqslant\int_{0}^{1}pt^{p-1}dt+p\left\Vert f\right\Vert _{\Weak L^{p}%
\left(  \mathbb{X},\mu\right)  }^{p}\int_{1}^{\left\Vert f\right\Vert
_{L^{\infty}\left(  \mathbb{X},\mu\right)  }}\frac{dt}{t}\\
&  =1+p\left\Vert f\right\Vert _{\Weak L^{p}\left(  \mathbb{X},\mu\right)
}^{p}\log\left(  \left\Vert f\right\Vert _{L^{\infty}\left(  \mathbb{X}%
,\mu\right)  }\right)  .
\end{align*}

(2) As before,%
\begin{align*}
\left\Vert f\right\Vert _{L^{s}\left(  \mathbb{X},\mu\right)  }^{s} &
=\int_{0}^{+\infty}st^{s-1}\mu\left\{  x\in\mathbb{X},\ \left\vert f\left(
x\right)  \right\vert >t\right\}  dt\\
&  \leqslant s\left\Vert f\right\Vert _{\Weak L^{p}\left(  \mathbb{X}%
,\mu\right)  }^{p}\int_{0}^{\left\Vert f\right\Vert _{L^{\infty}\left(
\mathbb{X},\mu\right)  }}t^{s-1-p}dt\\
&  =\frac{s}{s-p}\left\Vert f\right\Vert _{\Weak L^{p}\left(  \mathbb{X}%
,\mu\right)  }^{p}\left\Vert f\right\Vert _{L^{\infty}\left(  \mathbb{X}%
,\mu\right)  }^{s-p}.
\end{align*}
\end{proof}
Our first result is a simple application of the Hausdorff-Young inequality.
\begin{theorem}
\label{Thm Estimate}Let $\Omega$\textit{\ be a bounded open set in
}$\mathbb{R}^{d}$.

\textbf{(1)}\textit{\ If }$2\leqslant p<+\infty$\textit{\ and }$1/p+1/q=1$%
\textit{, then}%
\[
\left\Vert \mathcal{D}\left(  R\Omega-x\right)  \right\Vert _{L^{p}\left(
\mathbb{T}^{d}\right)  }\leqslant R^{d}\left\Vert \left\{  \widehat{\chi
_{\Omega}}(Rn)\right\}  _{n\neq0}\right\Vert _{L^{q}\left(  \mathbb{Z}%
^{d}\right)  }.
\]

\end{theorem}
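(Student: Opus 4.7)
The plan is to apply the Hausdorff--Young inequality directly to the Fourier series representation of the discrepancy established at the beginning of the paper. Recall that we already have the identity
\[
\mathcal{D}(R\Omega - x) = \sum_{n \in \mathbb{Z}^d \setminus \{0\}} R^d \widehat{\chi_\Omega}(Rn)\, \exp(2\pi i n x),
\]
so the Fourier coefficients of the periodic function $x \mapsto \mathcal{D}(R\Omega - x)$ are precisely $c_n = R^d \widehat{\chi_\Omega}(Rn)$ for $n \neq 0$, and $c_0 = 0$.

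The Hausdorff--Young inequality on $\mathbb{T}^d$ states that for $2 \leqslant p < +\infty$ and $1/p + 1/q = 1$ (so $1 < q \leqslant 2$), any function with Fourier coefficients $\{c_n\}$ satisfies
\[
\Bigl\| \sum_{n \in \mathbb{Z}^d} c_n \exp(2\pi i n x) \Bigr\|_{L^p(\mathbb{T}^d)} \leqslant \bigl\| \{c_n\} \bigr\|_{\ell^q(\mathbb{Z}^d)}.
\]
Applying this to our specific sequence and pulling out the factor $R^d$ yields the claimed bound immediately.

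There is no real obstacle: the only subtlety is that the Fourier expansion of the discrepancy was derived in the introduction only in the $L^2$ sense, but this is sufficient to identify the Fourier coefficients unambiguously, and Hausdorff--Young then promotes the $\ell^q$ summability of the coefficients to $L^p$ boundedness of the function. The hypothesis that $\Omega$ be convex or have any regularity is not needed here; boundedness is what guarantees that $\widehat{\chi_\Omega}$ is defined pointwise and the Fourier series manipulation is legitimate.
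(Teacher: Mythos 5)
Your argument is exactly the paper's: identify the Fourier coefficients of $\mathcal{D}(R\Omega-x)$ as $R^d\widehat{\chi_\Omega}(Rn)$ for $n\neq 0$ and apply the (dual) Hausdorff--Young inequality on $\mathbb{T}^d$, which the paper justifies by Riesz--Thorin interpolation between Parseval and the trivial $(\infty,1)$ case. Your extra remark on the $L^2$-sense identification of the coefficients is a harmless and correct clarification, so the proposal is correct and essentially identical to the paper's proof.
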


\textbf{(2)}\textit{\ If }$2<p<+\infty$\textit{\ and }$1/p+1/q=1$\textit{,
then}%
\[
\left\Vert \mathcal{D}\left(  R\Omega-x\right)  \right\Vert _{\Weak L^{p}%
\left(  \mathbb{T}^{d}\right)  }\leqslant CR^{d}\left\Vert \left\{
\widehat{\chi_{\Omega}}(Rn)\right\}  _{n\neq0}\right\Vert _{\Weak L^{q}\left(
\mathbb{Z}^{d}\right)  }.
\]

\textbf{(3)}\textit{\ If }$2<p<+\infty$\textit{\ and }$1/p+1/q=1$\textit{,
then}%
\[
\left\Vert \mathcal{D}\left(  R\Omega-x\right)  \right\Vert _{L^{p}\left(
\mathbb{T}^{d}\right)  }\leqslant CR^{d}\log^{1/p}\left(  2+R\right)
\left\Vert \left\{  \widehat{\chi_{\Omega}}(Rn)\right\}  _{n\neq0}\right\Vert
_{\Weak L^{q}\left(  \mathbb{Z}^{d}\right)  }.
\]

\textbf{(4)}\textit{\ If }$1\leqslant p\leqslant+\infty$\textit{, then}%
\[
\left\Vert \mathcal{D}\left(  R\Omega-x\right)  \right\Vert _{L^{p}\left(
\mathbb{T}^{d}\right)  }\geqslant\sup_{n\neq0}\left\{  \left\vert
R^{d}\widehat{\chi_{\Omega}}(Rn)\right\vert \right\}  .
\]

\begin{proof}
Point (1) readily follows from the Fourier expansion of the discrepancy and
the Hausdorff-Young inequality: If $2\leqslant p\leqslant+\infty$\ and
$1/p+1/q=1$, then
\[
\left\Vert f\right\Vert _{L^{p}\left(  \mathbb{T}^{d}\right)  }\leqslant
\left\Vert \widehat{f}\right\Vert _{L^{q}\left(  \mathbb{Z}^{d}\right)  }.
\]
The case $\left(  p,q\right)  =\left(  2,2\right)  $ is 
Parseval's identity. The case $\left(  p,q\right)  =\left(  +\infty,1\right)  $ is
immediate. The intermediate cases follow by the Riesz-Thorin interpolation
theorem. See \cite[Theorem 1.1.1]{B-L} or \cite[Chapter V, \S 1]{SW}.
Similarly, point (2) follows from the Hausdorff-Young inequality for Lorentz
spaces: If $2<p<+\infty$\ and if $1/p+1/q=1$, then%
\[
\left\Vert f\right\Vert _{\Weak L^{p}\left(  \mathbb{T}^{d}\right)  }\leqslant
C\left\Vert \widehat{f}\right\Vert _{\Weak L^{q}\left(  \mathbb{Z}^{d}\right)
}.
\]
The proof of this inequality is by real interpolation between the extreme
cases $L^{2}\rightarrow L^{2}$ and $L^{1}\rightarrow L^{\infty}$. See the
general Marcinkiewicz interpolation theorem \cite[Theorem 5.3.2]{B-L} or
\cite[Chapter V, \S 3]{SW}. Point (3) follows from point (2), Lemma
\ref{Key Lemma}, and the trivial estimate $\left\vert \mathcal{D}\left(
R\Omega-x\right)  \right\vert \leqslant CR^{d}$. Finally, a Fourier
coefficient is dominated by the norm of the function, and point (4) follows.
\end{proof}

The above theorem is quite abstract. In order to obtain explicit results, one
has to estimate the norms of the sequences $\left\{  \widehat{\chi_{\Omega}%
}(Rn)\right\}  _{n\neq0}$. The interest of case (3) is when the $L^{q}\left(
\mathbb{Z}^{d}\right)  $ norm is infinite and the $\Weak L^{q}\left(
\mathbb{Z}^{d}\right)  $ norm is finite.

In order to introduce the next result, we recall that the modulus of
continuity of a characteristic function shows that such a function does not
belong to a Sobolev class $W^{\alpha,2}\left(  \mathbb{R}^{d}\right)  $
whenever $\alpha\geqslant1/2$. See \cite[Chapter 5, \S 5]{SV}. Moreover, in  \cite[Corollary 2.2]{KW} it is proved that for
every set $\Omega$ with finite positive measure, without any regularity
assumption, there exists a constant $C$ such that%
\[
\int_{\left\vert \xi\right\vert >R}\left\vert \widehat{\chi_{\Omega}}\left(
\xi\right)  \right\vert ^{2}d\xi\geqslant CR^{-1}.
\]
It follows that a uniform inequality of the kind
$\left\vert \widehat{\chi_{\Omega}}(\xi)\right\vert \leqslant C\left\vert
\xi\right\vert ^{-\beta}$ cannot hold with $\beta>\left(  d+1\right)  /2$. On
the other hand, this estimate holds with $\beta=\left(  d+1\right)  /2$ if
$\Omega$ is a bounded convex domain with smooth boundary with non-vanishing
Gaussian curvature. See \cite{S}. See also \cite{B-N-W} for possible generalizations 
to convex bodies with smooth boundary containing isolated points  with vanishing
Gaussian curvature.

\begin{corollary}
\label{Corollary3}\textit{Assume that }$\Omega$\textit{\ is a bounded convex
domain such that }%
\[
\left\vert \widehat{\chi_{\Omega}}(\xi)\right\vert \leqslant C\left\vert
\xi\right\vert ^{-\left(  d+1\right)  /2}.
\]
\textbf{(1)}\textit{\ If }$p<2d/\left(  d-1\right)  $\textit{\ and }%
$R>2$\textit{, then }%
\[
\left\Vert \mathcal{D}\left(  R\Omega-x\right)  \right\Vert _{L^{p}\left(
\mathbb{T}^{d}\right)  }\leqslant CR^{\left(  d-1\right)  /2}.
\]
\textbf{(2)}\textit{\ If }$p\leqslant2d/\left(  d-1\right)  $\textit{\ and
}$R>2$\textit{, then }%
\[
\left\Vert \mathcal{D}\left(  R\Omega-x\right)  \right\Vert _{\Weak L^{p}%
\left(  \mathbb{T}^{d}\right)  }\leqslant CR^{\left(  d-1\right)  /2}.
\]
\textbf{(3)}\textit{\ If }$p=2d/\left(  d-1\right)  $\textit{\ and }%
$R>2$\textit{, then }%
\[
\left\Vert \mathcal{D}\left(  R\Omega-x\right)  \right\Vert _{L^{p}\left(
\mathbb{T}^{d}\right)  }\leqslant CR^{\left(  d-1\right)  /2}\log^{\left(d-1\right)/\left(2d\right)}\left(
R\right)  .
\]
\textbf{(4)}\textit{\ If }$p>2d/\left(  d-1\right)  $\textit{\ and }%
$R>2$\textit{, then }%
\[
\left\Vert \mathcal{D}\left(  R\Omega-x\right)  \right\Vert _{L^{p}\left(
\mathbb{T}^{d}\right)  }\leqslant CR^{d\left(  pd-p-d+1\right)  /p\left(
d+1\right)  }.
\]

\end{corollary}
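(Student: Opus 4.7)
The plan is to apply Theorem \ref{Thm Estimate} for parts (1)--(3) and Lemma \ref{Key Lemma} for part (4), after first computing the relevant $L^{q}$ and $\Weak L^{q}$ norms of the sequence $\{\widehat{\chi_{\Omega}}(Rn)\}_{n\neq 0}$. The hypothesis gives $|\widehat{\chi_{\Omega}}(Rn)| \leqslant C R^{-(d+1)/2}|n|^{-(d+1)/2}$ for $n \neq 0$. The series $\sum_{n \neq 0}|n|^{-(d+1)q/2}$ converges precisely when $q > 2d/(d+1)$, equivalently when $p < 2d/(d-1)$, and in that range the $L^{q}(\mathbb{Z}^{d})$ norm of the sequence is $O(R^{-(d+1)/2})$. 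At the endpoint $q = 2d/(d+1)$ the strong sum diverges, but the distribution function is still controlled: the set $\{n \neq 0 : |\widehat{\chi_{\Omega}}(Rn)| > t\}$ sits inside a Euclidean ball of radius $\lesssim R^{-1}t^{-2/(d+1)}$, hence contains at most $C R^{-d}t^{-2d/(d+1)}$ nonzero integer points, so $t^{q}$ times the cardinality is uniformly bounded by $CR^{-d}$. Taking the $q$-th root yields $\|\{\widehat{\chi_{\Omega}}(Rn)\}_{n\neq 0}\|_{\Weak L^{q}(\mathbb{Z}^{d})} \leqslant CR^{-(d+1)/2}$. In either case multiplication by $R^{d}$ produces $CR^{(d-1)/2}$.

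Part (1) is now an immediate application of Theorem \ref{Thm Estimate}(1) when $2 \leqslant p < 2d/(d-1)$, and for $p < 2$ it follows from the nesting of Lebesgue spaces on the torus. For part (2): at $p = 2d/(d-1)$ it follows from Theorem \ref{Thm Estimate}(2) together with the $\Weak L^{q}$ computation above, and for $p < 2d/(d-1)$ from part (1) via the trivial inclusion $L^{p}(\mathbb{T}^{d}) \subseteq \Weak L^{p}(\mathbb{T}^{d})$. Part (3) is Theorem \ref{Thm Estimate}(3) at the critical pair $p = 2d/(d-1)$, $q = 2d/(d+1)$; the resulting factor $\log^{1/p}(2+R)$ is comparable to $\log^{(d-1)/(2d)}(R)$ for $R > 2$.

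For part (4), with $p > 2d/(d-1)$, I would invoke Lemma \ref{Key Lemma}(2) with inner exponent $p_{0} := 2d/(d-1)$ and outer exponent $p$. The two inputs are the weak bound from part (2), $\|\mathcal{D}(R\Omega - x)\|_{\Weak L^{p_{0}}(\mathbb{T}^{d})} \leqslant CR^{(d-1)/2}$, and the Herz--Hlawka pointwise estimate $\|\mathcal{D}(R\Omega - x)\|_{L^{\infty}(\mathbb{T}^{d})} \leqslant CR^{d(d-1)/(d+1)}$ recalled in the introduction. Lemma \ref{Key Lemma}(2) then gives
\[
\|\mathcal{D}(R\Omega - x)\|_{L^{p}(\mathbb{T}^{d})}^{p} \leqslant C R^{(d-1)p_{0}/2}\, R^{d(d-1)(p-p_{0})/(d+1)}.
\]
Substituting $p_{0} = 2d/(d-1)$ and collecting terms reduces the exponent of $R$ in $\|\mathcal{D}\|_{L^{p}}$ to $d(d-1)(p-1)/((d+1)p)$, which is exactly $d(pd-p-d+1)/(p(d+1))$ since $(p-1)(d-1) = pd-p-d+1$.

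The only step requiring genuine care is the endpoint weak $\ell^{q}$ computation, where the strong $\ell^{q}$ sum fails logarithmically; once that bound is in hand the four parts reduce to elementary exponent arithmetic applied to the abstract inequalities of Theorem \ref{Thm Estimate} and Lemma \ref{Key Lemma}.
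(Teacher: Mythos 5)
Your proof is correct and follows essentially the same route as the paper: parts (1)--(3) come from Theorem \ref{Thm Estimate} together with the $\ell^{q}$ and weak-$\ell^{q}$ behaviour of the sequence $\{|n|^{-(d+1)/2}\}_{n\neq 0}$ (your explicit distribution-function computation at the endpoint $q=2d/(d+1)$ is just a spelled-out version of the paper's membership criterion, with the correct $R^{-(d+1)/2}$ scaling), and part (4) is exactly the paper's argument, namely Lemma \ref{Key Lemma}(2) applied with the weak $L^{2d/(d-1)}$ bound from part (2) and the Herz--Hlawka $L^{\infty}$ estimate, with the exponent arithmetic checking out.
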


\begin{proof}
Points (1), (2) and (3) follow from Theorem \ref{Thm Estimate}, and the
observation that the sequence $\left\{  \left\vert n\right\vert ^{-\alpha
}\right\}  _{n\neq0}$ is in $L^{q}\left(  \mathbb{Z}^{d}\right)  $ if and only
if $q\alpha>d$, and it is in $\Weak L^{q}\left(  \mathbb{Z}^{d}\right)  $ if
and only if $q\alpha\geqslant d$. Point (4) follows from point (2) with
$p=2d/(d-1)$, the pointwise estimate $\left\vert \mathcal{D}\left(
R\Omega-x\right)  \right\vert \leqslant CR^{d(d-1)/(d+1)}$ proved in \cite{H}
and \cite{Hl}, and (2) in Lemma \ref{Key Lemma}.
\end{proof}

\bigskip

The estimates in the above Corollary for $p<2d/\left(  d-1\right)  $ are
essentially sharp. In order to show this, we first recall the following result
on the Fourier transform of the characteristic function of a convex set.

\begin{theorem}
\label{asymptotic}Let $\Omega\subset\mathbb{R}^{d}$ be a convex body with
smooth boundary having everywhere positive Gaussian curvature. For every
$\xi\in\mathbb{R}^{d}\setminus\left\{  0\right\}  $ let $\sigma\left(
\xi\right)  $ be the unique point on the boundary $\partial\Omega$ with
outward unit normal $\xi/\left\vert \xi\right\vert $. Also let $K\left(
\sigma\left(  \xi\right)  \right)  $ be the Gaussian curvature of
$\partial\Omega$ at $\sigma\left(  \xi\right)  $. Then, as $\left\vert
\xi\right\vert \rightarrow+\infty$, the Fourier transform of $\chi_{\Omega
}\left(  x\right)  $ has the asymptotic expansion%
\begin{align*}
&  \widehat{\chi_{\Omega}}\left(  \xi\right) \\
&  =-\frac{1}{2\pi i}\left\vert \xi\right\vert ^{-\frac{d+1}{2}}\left[
K^{-\frac{1}{2}}\left(  \sigma\left(  \xi\right)  \right)  e^{-2\pi i\left(
\xi\cdot\sigma\left(  \xi\right)  -\frac{d-1}{8}\right)  }-K^{-\frac{1}{2}%
}\left(  \sigma\left(  -\xi\right)  \right)  e^{-2\pi i\left(  \xi\cdot
\sigma\left(  -\xi\right)  +\frac{d-1}{8}\right)  }\right] \\
&  +O\left(  \left\vert \xi\right\vert ^{-\frac{d+3}{2}}\right)  .
\end{align*}

\end{theorem}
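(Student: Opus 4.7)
The plan is to reduce $\widehat{\chi_\Omega}(\xi)$ to an oscillatory surface integral over $\partial\Omega$ and then apply the method of stationary phase. Write $\xi=R\omega$ with $R=|\xi|$ and $|\omega|=1$. Starting from the identity $e^{-2\pi i\xi\cdot x}=(-2\pi i|\xi|^{2})^{-1}\,\xi\cdot\nabla_{x}e^{-2\pi i\xi\cdot x}$ and applying the divergence theorem gives
$$\widehat{\chi_{\Omega}}(\xi)=-\frac{1}{2\pi iR}\int_{\partial\Omega}(\omega\cdot n(x))\,e^{-2\pi iR\,\omega\cdot x}\,d\sigma(x),$$
where $n(x)$ is the outward unit normal. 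It now suffices to expand this surface integral to leading order in $R$.

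The next step is to locate the critical points of the phase $\psi(x)=-2\pi\omega\cdot x$ restricted to $\partial\Omega$. These are exactly the points where $\omega$ is normal to $\partial\Omega$, namely $\sigma(\xi)$, where the outward normal equals $\omega$, and $\sigma(-\xi)$, where it equals $-\omega$. At each of these points I would introduce local coordinates in which the tangent plane is horizontal and parametrize $\partial\Omega$ as the graph of a smooth function $h$ with $h(0)=0$ and $\nabla h(0)=0$. Convexity forces $h\leqslant 0$ at $\sigma(\xi)$ and $h\geqslant 0$ at $\sigma(-\xi)$, while positivity of the Gaussian curvature ensures that both critical points are nondegenerate with $|\det \Hess(h)(0)|=K(\sigma(\pm\xi))$. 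This determines both the modulus and the signature $\pm(d-1)$ of the Hessian of $\psi$ in local coordinates.

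With these data, the standard stationary phase expansion applied to each critical point, with amplitude $\omega\cdot n=+1$ at $\sigma(\xi)$ and $\omega\cdot n=-1$ at $\sigma(-\xi)$, yields two terms of size $R^{-(d-1)/2}$ plus an error of order $R^{-(d+1)/2}$. The $(2\pi)^{(d-1)/2}$ factors from stationary phase cancel against those from $|\det \Hess \psi|^{1/2}$, the $\pi\,\mathrm{sgn}(\Hess\psi)/4$ factors coming from the two opposite signatures combine with the phase value $-2\pi R\,\omega\cdot\sigma(\pm\xi)=-2\pi\,\xi\cdot\sigma(\pm\xi)$ to produce the terms $\mp(d-1)/8$ in the exponents of the statement, and the minus sign between the two contributions is inherited from $\omega\cdot n(\sigma(-\xi))=-1$. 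Multiplying by the prefactor $-1/(2\pi iR)$ converts the leading terms into $|\xi|^{-(d+1)/2}$ and the remainder into $O(|\xi|^{-(d+3)/2})$.

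The main obstacle is the careful bookkeeping of signs — tracking the interplay between the sense of the exponential $e^{-2\pi i\,\cdot}$, the signature of the Hessian at each critical point, and the branch of the complex square root appearing in the stationary phase formula. The cleanest way to handle this is to carry out the computation in detail at $\sigma(\xi)$ only and then use the symmetry $\sigma(-\xi)\leftrightarrow\sigma(\xi)$, $\omega\leftrightarrow-\omega$, which flips the signature and the sign of the amplitude simultaneously, to read off the contribution at $\sigma(-\xi)$.
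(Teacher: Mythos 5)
Your proposal is correct, but note that the paper does not actually prove Theorem \ref{asymptotic}: its ``proof'' is only a citation to \cite{G-G-V}, \cite{H}, \cite{Hl}, where this classical expansion is established. What you sketch is essentially the argument contained in those references (the Herz--Hlawka route): the divergence-theorem identity
\[
\widehat{\chi_{\Omega}}(\xi)=-\frac{1}{2\pi iR}\int_{\partial\Omega}\left(  \omega\cdot n(x)\right)  e^{-2\pi iR\,\omega\cdot x}\,d\sigma(x),\qquad \xi=R\omega,
\]
followed by stationary phase at the two points $\sigma(\pm\xi)$, and your sign bookkeeping does come out right: with the vertical axis along $+\omega$ at both points, the graph function satisfies $h\leqslant0$ at $\sigma(\xi)$ and $h\geqslant0$ at $\sigma(-\xi)$, so the phase Hessians of $e^{-2\pi iR(\mathrm{const}+h(u))}$ have signatures $+(d-1)$ and $-(d-1)$, producing the factors $e^{\pm 2\pi i(d-1)/8}$, while $|\det\Hess h(0)|=K(\sigma(\pm\xi))$ gives the $K^{-1/2}$ amplitudes and $\omega\cdot n(\sigma(-\xi))=-1$ gives the relative minus sign; the prefactor $-1/(2\pi iR)$ then converts $R^{-(d-1)/2}$ into $|\xi|^{-(d+1)/2}$ and the stationary-phase remainder into $O(|\xi|^{-(d+3)/2})$. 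To turn the sketch into a complete proof you should add three routine points: (i) a partition of unity isolating $\sigma(\xi)$ and $\sigma(-\xi)$, with repeated integration by parts (nonstationary phase) showing the remaining piece of $\partial\Omega$ contributes $O(R^{-N})$ for every $N$; (ii) the observation that the surface-measure Jacobian $\sqrt{1+|\nabla h|^{2}}$ and the variable amplitude $\omega\cdot n$ equal $1$ (respectively $\pm1$) at the critical points and hence only affect lower-order terms; (iii) uniformity of all constants in the direction $\omega$, which follows from compactness of the sphere and of $\partial\Omega$ together with smoothness and the strict positivity of $K$ — this uniformity is needed where the theorem is applied in the paper, since there $\xi$ ranges over dilates of all lattice directions. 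Compared with the paper's citation, your route buys a self-contained proof at the cost of carrying out the standard stationary-phase estimates explicitly.
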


\begin{proof}
See e.g. \cite{G-G-V}, \cite{H}, or \cite{Hl}.
\end{proof}

The following result partially complements Corollary \ref{Corollary3}.

\begin{theorem}
\label{ThmDaSotto}Let $\Omega\subset\mathbb{R}^{d}$ be a convex body with
smooth boundary having everywhere positive Gaussian curvature.\newline%
\textbf{(1)} If $\Omega$ is not symmetric about a point, or if the dimension
$d\not \equiv 1\left(  \operatorname{mod}4\right)  $, then for every
$p\geqslant1$ there exists $C>0$ such that for every $R>2$%
\[
\left\Vert \mathcal{D}\left(  R\Omega-x\right)  \right\Vert _{L^{p}\left(
\mathbb{T}^{d}\right)  }\geqslant CR^{\frac{d-1}{2}}.
\]
\textbf{(2)} If $\Omega$ is symmetric about a point and if $d\equiv1\,\left(
\operatorname{mod}4\right)  $ then%
\[
\limsup_{R\rightarrow+\infty}\left\{  R^{-\frac{d-1}{2}}\left\Vert
\mathcal{D}\left(  R\Omega-x\right)  \right\Vert _{L^{p}\left(  \mathbb{T}%
^{d}\right)  }\right\}  >0\ \ \ \ \ \ \ \ \ \ \text{if }p\geqslant1,
\]%
\[
\liminf_{R\rightarrow+\infty}\left\{  R^{-\frac{d-1}{2}}\left\Vert
\mathcal{D}\left(  R\Omega-x\right)  \right\Vert _{L^{p}\left(  \mathbb{T}%
^{d}\right)  }\right\}  =0\ \ \ \ \ \ \text{if }p<\frac{2d}{d-1}.
\]
More precisely, if $p<2d/\left(  d-1\right)  $ there exist $C>0$, and a
sequence $R_{j}\rightarrow+\infty$, such that%
\[
\left\Vert \mathcal{D}\left(  R_{j}\Omega-x\right)  \right\Vert _{L^{p}%
(\mathbb{T}^{d})}\leqslant CR_{j}^{\frac{d-1}{2}}\left(  \frac{\log\left(
R_{j}\right)  }{\log\left(  \log\left(  R_{j}\right)  \right)  }\right)
^{\frac{d-1}{2d}-\frac{1}{p}}.
\]

\end{theorem}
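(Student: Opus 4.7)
The plan is to start from the Fourier lower bound
\[
\|\mathcal{D}(R\Omega-x)\|_{L^p(\mathbb{T}^d)} \geq \sup_{n\neq 0} R^d|\widehat{\chi_\Omega}(Rn)|,
\]
from Theorem~\ref{Thm Estimate}(4), valid for every $p\geq 1$, and combine it with the asymptotic expansion of Theorem~\ref{asymptotic}. Setting $\omega = \xi/|\xi|$, $a_\pm(\omega) = K^{-1}(\sigma(\pm\omega))$, and $w(\omega) = \omega\cdot(\sigma(\omega) - \sigma(-\omega)) > 0$ for the width of $\Omega$ in direction $\omega$, a direct computation gives
\[
R^d|\widehat{\chi_\Omega}(Rn)| = \frac{R^{(d-1)/2}}{2\pi|n|^{(d+1)/2}}|B(Rn)| + O\!\bigl(R^{(d-3)/2}|n|^{-(d+3)/2}\bigr),
\]
\[
|B(\xi)|^2 = a_+(\omega) + a_-(\omega) - 2\sqrt{a_+(\omega)a_-(\omega)}\cos\!\bigl(2\pi|\xi|w(\omega) - \pi(d-1)/2\bigr).
\]
The whole task reduces to locating, at each relevant $R$, a lattice point $n$ of bounded norm at which $|B(Rn)|$ is bounded below, and, for the $\liminf$ part of (2), sequences of $R$'s at which it is simultaneously small at many such $n$.

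\emph{Part (1).} If $\Omega$ is not centrally symmetric about any point, then the curvature function $K$ on $\partial\Omega$ fails to satisfy $K(\sigma(\omega)) = K(\sigma(-\omega))$ identically: indeed, if it did, Minkowski's uniqueness theorem would force $\Omega$ to be a translate of $-\Omega$. Hence on an open cone of directions $a_+(\omega) \neq a_-(\omega)$; there $|B(\xi)|^2 \geq (\sqrt{a_+(\omega)}-\sqrt{a_-(\omega)})^2 \geq c^2 > 0$ uniformly in $|\xi|$, and fixing any lattice $n_0$ whose direction lies in the cone yields $\sup_n R^d|\widehat{\chi_\Omega}(Rn)| \geq cR^{(d-1)/2}$ for every $R>2$. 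When $\Omega$ is centrally symmetric but $d\not\equiv 1\pmod 4$, the formula collapses to $|B(\xi)|=2\sqrt{a(\omega)}|\sin(\pi|\xi|w(\omega)-\pi(d-1)/4)|$; since $(d-1)/4 \notin \mathbb{Z}$, the zeros of the sine sit at the shifted positions $|\xi|w(\omega) \in \{1/4,1/2,3/4\} + \mathbb{Z}$, and a Diophantine argument exploiting the richness of the family $\{|n|w(\omega_n)\}$ of lattice widths produces, for every $R>2$, some lattice $n$ of bounded norm at which the corresponding phase sits at distance at least $c$ from the zero set of $|B|$. The pointwise Fourier bound then closes the case.

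\emph{Part (2).} When $\Omega$ is centrally symmetric and $d\equiv 1\pmod 4$ the shift drops out, $|B(\xi)| = 2\sqrt{a(\omega)}|\sin(\pi|\xi|w(\omega))|$, and the zeros now lie at integer values of $|\xi|w(\omega)$ where many can be met simultaneously by choosing $R$ carefully. The $\limsup>0$ assertion is still furnished by the pointwise Fourier bound: for any fixed lattice $n_0$, the arithmetic progression $R_j = (j+1/2)/|n_0|w(\omega_{n_0})$ makes $|\sin|$ equal to one, so $\|\mathcal{D}(R_j\Omega-x)\|_{L^p} \geq cR_j^{(d-1)/2}$. The $\liminf$ assertion is the substantive construction. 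For each large $M$, a Dirichlet box-principle applied to the finite family $\{|n|w(\omega_n) : 0<|n|\leq M\}$ of cardinality $\asymp M^d$ produces a positive real $R(M) \leq M^{CM^d}$ for which $R(M)|n|w(\omega_n)$ lies within $c/M$ of an integer for every $0<|n|\leq M$; inverting, $M \asymp (\log R/\log\log R)^{1/d}$. Splitting the Fourier series of $\mathcal{D}$ at the threshold $|n|=M$, the low-frequency piece is estimated using $|\sin\theta|\leq|\theta|$ and saves a power of $M$ against the trivial bound, while the high-frequency tail is controlled through the $\Weak L^{2d/(d-1)}(\mathbb{Z}^d)$ estimate from Corollary~\ref{Corollary3}(2), passed to an $L^p(\mathbb{T}^d)$ bound via Lemma~\ref{Key Lemma}(2) combined with the Herz-Hlawka pointwise estimate $|\mathcal{D}|\leq CR^{d(d-1)/(d+1)}$. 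Choosing $R_j = R(M_j)$ with $M_j$ optimizing the balance between the two contributions produces the claimed logarithmic factor $(\log R_j/\log\log R_j)^{(d-1)/(2d)-1/p}$.

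The principal obstacle is the $\liminf$ construction in part~(2). It rests on combining (i) a quantitative simultaneous Dirichlet approximation over a cap of polynomially many lattice widths with (ii) a tight matching against the weak-$L^{2d/(d-1)}$ tail from Corollary~\ref{Corollary3}(2), transferred to $L^p$ through Lemma~\ref{Key Lemma}(2). The precise exponent $(d-1)/(2d)-1/p$ emerges from optimizing the splitting threshold $M$ in terms of $R$, and getting the constants right in this two-scale analysis is the delicate step.
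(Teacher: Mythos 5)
Your skeleton coincides with the paper's: lower bounds via the single-coefficient bound of Theorem~\ref{Thm Estimate}(4) plus the expansion of Theorem~\ref{asymptotic}, the $\limsup$ via the sine factor, and the $\liminf$ via simultaneous Dirichlet approximation in the spirit of Parnovski--Sobolev followed by Hausdorff--Young. Two steps, however, do not hold up as written. In part (1), symmetric case with $d\not\equiv1\pmod 4$, you only assert that ``a Diophantine argument exploiting the richness of the family $\{|n|w(\omega_n)\}$'' yields, for every $R$, a frequency whose phase is far from the zero set; this is the entire content of that case and no argument is given. The concrete device (the paper's) is a doubling trick along one direction: if $R|n|w(\omega_n)$ lies within $1/10$ of the zero set $(d-1)/4+\mathbb{Z}$, then $2R|n|w(\omega_n)$ lies within $1/5$ of $(d-1)/2+\mathbb{Z}$, which is at distance at least $1/4$ from $(d-1)/4+\mathbb{Z}$ because $(d-1)/4\notin\mathbb{Z}$; hence either $n$ or $2n$ gives $|B|\geqslant c$ and the bound $cR^{(d-1)/2}$ follows.

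More seriously, your treatment of the high-frequency tail in the $\liminf$ construction would fail. Controlling the piece $|n|>M$ by the endpoint weak-type estimate of Corollary~\ref{Corollary3}(2), transferred to $L^p$ via Lemma~\ref{Key Lemma}(2) and the Herz--Hlawka pointwise bound, cannot produce any decay in $M$: the weak quasi-norm of the tail $\{|n|^{-(d+1)/2}\}_{|n|>M}$ at the critical exponent $2d/(d+1)$ is comparable to a constant, and Lemma~\ref{Key Lemma}(2) combined with $|\mathcal D|\leqslant CR^{d(d-1)/(d+1)}$ gives exponents of $R$ at least $(d-1)/2$ (this is exactly Corollary~\ref{Corollary3}(4)). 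Your two pieces would then sum to $O\bigl(R^{(d-1)/2}\bigr)$, which is useless for an upper bound that must be $o\bigl(R^{(d-1)/2}\bigr)$. The gain from the tail must come from the strict inequality $p<2d/(d-1)$, i.e.\ $q>2d/(d+1)$: keep the tail inside the strong Hausdorff--Young sum and use $\sum_{|n|>M}|n|^{-(d+1)q/2}\approx M^{\,d-(d+1)q/2}\to0$, as the paper does. With that replacement your scales do work (and here your route genuinely differs in detail from the paper's, which cuts at $j^\beta$ with quality $1/j$ and balances the two terms): quality $1/M$ on $|n|\leqslant M$, $R(M)\leqslant M^{CM^d}$ and $R(M)\geqslant M$ (needed to absorb the $O(R^{(d-3)/2})$ error), hence $M\gtrsim(\log R/\log\log R)^{1/d}$, and $\Vert\mathcal D\Vert_{L^p}\lesssim R^{(d-1)/2}\bigl(M^{-q}+M^{\,d-(d+1)q/2}\bigr)^{1/q}+O(R^{(d-3)/2})\lesssim R^{(d-1)/2}M^{\,d/q-(d+1)/2}$, whose resulting exponent $\frac1d\bigl(\frac dq-\frac{d+1}2\bigr)=\frac{d-1}{2d}-\frac1p$ is the claimed one. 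Note finally that, as in the paper, this argument requires $p\geqslant2$ for Hausdorff--Young, so the sub-$2$ range should be addressed (or restricted) explicitly.
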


\begin{proof}
In order to prove point (1) observe that,  by Theorem \ref{Thm Estimate},%
\[
\left\Vert \mathcal{D}\left(  R\Omega-x\right)  \right\Vert _{L^{p}\left(
\mathbb{T}^{d}\right)  }\geqslant\sup_{n\neq0}\left\{  \left\vert
R^{d}\widehat{\chi_{\Omega}}(Rn)\right\vert \right\}  .
\]
Moreover, by Theorem \ref{asymptotic}, for $n\neq0$,%
\begin{align*}
&  R^{d}\widehat{\chi_{\Omega}}(Rn)=\\
&  \frac{-1}{2\pi i}R^{\frac{d-1}{2}}\left\vert n\right\vert ^{-\frac{d+1}%
{2}}
 \left[  K^{-\frac{1}{2}}\left(  \sigma\left(  n\right)  \right)
e^{-2\pi i\left(  Rn\cdot\sigma\left(  n\right)  -\frac{d-1}{8}\right)
}-K^{-\frac{1}{2}}\left(  \sigma\left(  -n\right)  \right)  e^{-2\pi i\left(
Rn\cdot\sigma\left(  -n\right)  +\frac{d-1}{8}\right)  }\right] \\
&  ~~~~~+O\left(  R^{\frac{d-3}{2}}\left\vert n\right\vert ^{-\frac{d+3}{2}%
}\right)  .
\end{align*}
If $\Omega$ is not symmetric, it follows that $K\left(  \sigma\left(
u\right)  \right)  $ cannot be symmetric (see \cite[\S 14, p. 133]{B-F}).
Since the set $\left\{  \frac{n}{\left\vert n\right\vert }:n\in\mathbb{Z}%
^{d}\setminus\left\{  0\right\}  \right\}  $ is dense in the unit sphere, by
continuity there exists $m\in\mathbb{Z}^{d}$ such that $
K\left(  \sigma\left(  m\right)  \right)  \neq K\left(  \sigma\left(
-m\right)  \right)  .
$
Then, for this $m$ and  $R$ large enough,%
\begin{align*}
&  \left\vert R^{d}\widehat{\chi_{\Omega}}(Rm)\right\vert \\
&  \geqslant\frac{1}{2\pi}R^{\frac{d-1}{2}}\left\vert m\right\vert
^{-\frac{d+1}{2}}\left\vert K^{-\frac{1}{2}}\left(  \sigma\left(  m\right)
\right)  -K^{-\frac{1}{2}}\left(  \sigma\left(  -m\right)  \right)
\right\vert +O\left(  R^{\frac{d-3}{2}}\left\vert m\right\vert ^{-\frac
{d+3}{2}}\right) \\
&  \geqslant CR^{\frac{d-1}{2}}.
\end{align*}
Assume now that $\Omega$ is symmetric, and translate the center of symmetry to the origin, so that for every $\xi$ we
have $\sigma\left(  -\xi\right)  =-\sigma\left(  \xi\right)  $ and $K\left(
\sigma\left(  \xi\right)  \right)  =K\left(  \sigma\left(  -\xi\right)
\right)  $. Choose $n\neq0$ and observe that%
\[
n\cdot\left(  \sigma\left(  n\right)  -\sigma\left(  -n\right)  \right)
=2n\cdot\sigma\left(  n\right)  \neq0.
\]
Indeed, $n\cdot\sigma\left(  n\right)  =0$ would imply that the center belongs
to the hyperplane tangent to $\partial\Omega$ at $\sigma\left(  n\right)  $,
hence $\Omega$ should have measure $0$. We have%
\begin{align*}
&  \left\vert R^{d}\widehat{\chi_{\Omega}}(Rn)\right\vert \\
&  \geqslant CR^{\frac{d-1}{2}}\left\vert n\right\vert ^{-\frac{d+1}{2}%
}K^{-\frac{1}{2}}\left(  \sigma\left(  n\right)  \right)  \left\vert e^{2\pi
i\left(  2Rn\cdot\sigma\left(  n\right)  -\frac{d-1}{4}\right)  }-1\right\vert
+O\left(  R^{\frac{d-3}{2}}\left\vert n\right\vert ^{-\frac{d+3}{2}}\right)  .
\end{align*}
Let $\left\Vert x\right\Vert $ denote the distance of a real number $x$ from
the integers. If%
\[
\left\Vert 2Rn\cdot\sigma\left(  n\right)  -\frac{d-1}{4}\right\Vert >\frac
{1}{10},
\]
then $\left\vert e^{2\pi i\left(  2Rn\cdot\sigma\left(  n\right)  -\frac
{d-1}{4}\right)  }-1\right\vert >c$ and we have%
\[
\left\vert R^{d}\widehat{\chi_{\Omega}}(Rn)\right\vert \geqslant
cR^{\frac{d-1}{2}}.
\]
Assume now that%
\[
\left\Vert 2Rn\cdot\sigma\left(  n\right)  -\frac{d-1}{4}\right\Vert
\leqslant\frac{1}{10}.
\]
Then%
\[
\left\Vert 4Rn\cdot\sigma\left(  n\right)  -\frac{d-1}{2}\right\Vert
\leqslant\frac{1}{5}.
\]
Since $d\not \equiv 1$ $\left(  \operatorname{mod}4\right)  $, we have%
\[
\left\Vert 4Rn\cdot\sigma\left(  n\right)  -\frac{d-1}{4}\right\Vert
\geqslant\frac{1}{20}.
\]
Applying the previous argument with $2n$ in place of $n$ provides the estimate%
\[
\left\vert R^{d}\widehat{\chi_{\Omega}}(R2n)\right\vert \geqslant
cR^{\frac{d-1}{2}}.
\]

In order to prove point (2), assume that $\Omega$ is symmetric and $d\equiv1$ $\left(
\operatorname{mod}4\right)  $. From the asymptotic estimate of $\widehat{\chi
}_{\Omega}\left(  \xi\right)  $ we obtain%
\begin{align*}
  \left\vert R^{d}\widehat{\chi_{\Omega}}(Rn)\right\vert 
  =\frac{1}{\pi}R^{\frac{d-1}{2}}\left\vert n\right\vert ^{-\frac{d+1}{2}%
}K^{-\frac{1}{2}}\left(  \sigma\left(  n\right)  \right)  \left\vert
\sin\left(  2\pi Rn\cdot\sigma\left(  n\right)  \right)  \right\vert +O\left(
R^{\frac{d-3}{2}}\left\vert n\right\vert ^{-\frac{d+3}{2}}\right)  .
\end{align*}
Since $n\cdot\sigma\left(  n\right)  \neq0,$
\begin{align*}
&  \limsup_{R\rightarrow+\infty}\left\{  R^{-\frac{d-1}{2}}\left\Vert
\mathcal{D}\left(  R\Omega-x\right)  \right\Vert _{L^{p}\left(  \mathbb{T}%
^{d}\right)  }\right\}  \geqslant\limsup_{R\rightarrow+\infty}\left\{
R^{-\frac{d-1}{2}}\left\vert R^{d}\widehat{\chi_{\Omega}}(Rn)\right\vert
\right\} \\
&  \geqslant\frac{1}{\pi}\left\vert n\right\vert ^{-\frac{d+1}{2}}K^{-\frac
{1}{2}}\left(  \sigma\left(  n\right)  \right)  \limsup_{R\rightarrow+\infty
}\left\{  \left\vert \sin\left(  2\pi Rn\cdot\sigma\left(  n\right)  \right)
\right\vert \right\}  >0.
\end{align*}

The last part of the proof relies on the ideas of Parnovski and Sobolev in
\cite{PS}. We need a variant of Dirichlet's theorem on simultaneous
diophantine approximation (see \cite{PS}). Let $\alpha_{1},\ldots,\alpha_{m}$
be real numbers, then for every positive integer $j$ there exist integers
$s_{1},\ldots,s_{m}$ and $r$ such that%
\[
j\leqslant r\leqslant j^{m+1}\quad\text{and}\quad\left\vert r\alpha_{k}%
-s_{k}\right\vert <j^{-1}\text{ for every }k=1,\ldots,m.
\]
Let $\beta=\frac{2p}{2d-pd+p}$, and
\[
\left\{  \alpha_{k}\right\}  _{k=1}^{m}=\left\{  n\cdot\sigma\left(  n\right)
\right\}  _{\left\vert n\right\vert \leqslant j^{\beta}}.
\]
Then there exist integers $s_{n}$ and $R_{j}$ such that%
\[
j\leqslant R_{j}\leqslant j^{cj^{\beta d}+1}\leqslant j^{Cj^{\beta d}%
}\ \ \ \text{and}\ \ \ \text{\ }\left\vert R_{j}n\cdot\sigma\left(  n\right)
-s_{n}\right\vert \leqslant j^{-1}.
\]
It follows that%
\[
\left\vert \sin\left(  2\pi R_{j}n\cdot\sigma\left(  n\right)  \right)
\right\vert =\left\vert \sin\left(  2\pi\left(  R_{j}n\cdot\sigma\left(
n\right)  -s_{n}\right)  \right)  \right\vert \leqslant2\pi j^{-1}.
\]
By the Hausdorff-Young inequality in Theorem \ref{Thm Estimate}, for
$p\geqslant2$ and $1/p+1/q=1$, we have%
\[
\left\{  \int_{\mathbb{T}^{d}}\left\vert \mathcal{D}\left(  R_{j}%
\Omega-x\right)  \right\vert ^{p}dx\right\}  ^{1/p}\leqslant\left\{
\sum_{0\neq n\in\mathbb{Z}^{d}}\left\vert R^{d}\widehat{\chi_{\Omega}}\left(
Rn\right)  \right\vert ^{q}\right\}  ^{1/q}.
\]
If $2\leqslant p<\frac{2d}{d-1}$ then the above estimates of $\widehat{\chi
_{\Omega}}\left(  R_{j}n\right)  $ yield%
\begin{eqnarray*}
&&  \sum_{n\neq0}\left\vert R_{j}^{d}\widehat{\chi_{\Omega}}\left(
R_{j}n\right)  \right\vert ^{q} \\
& \leqslant & CR_{j}^{\left(  d-1\right)  q/2}\sum_{n\neq0}\left\vert
n\right\vert ^{-\left(  d+1\right)  q/2}\left\vert \sin\left(  2\pi
R_{j}n\cdot\sigma\left(  n\right)  \right)  \right\vert ^{q}  +\sum_{n\neq0}O\left(  R_{j}^{\left(  d-3\right)  q/2}\left\vert
n\right\vert ^{-\left(  d+3\right)  q/2}\right) \\
&  \leqslant & CR_{j}^{\left(  d-1\right)  q/2}\left(  j^{-q}\sum_{0<\left\vert
n\right\vert \leqslant j^{\beta}}\left\vert n\right\vert ^{-\left(
d+1\right)  q/2}+\sum_{\left\vert n\right\vert >j^{\beta}}\left\vert
n\right\vert ^{-\left(  d+1\right)  q/2}\right)  +O\left(  R_{j}^{\left(
d-3\right)  q/2}\right) \\
&  \leqslant & CR_{j}^{\left(  d-1\right)  q/2}\left(  j^{-q}+j^{\beta\left(
d-\left(  d+1\right)  q/2\right)  }\right)  +O\left(  R_{j}^{\left(
d-3\right)  q/2}\right)  .
\end{eqnarray*}
Since $\beta=\frac{2q}{q\left(  d+1\right)  -2d}$ and $R_{j}\geqslant j$, we
obtain%
\[
\left\{  \sum_{m\neq0}\left\vert R_{j}^{d}\widehat{\chi_{\Omega}}\left(
R_{j}n\right)  \right\vert ^{q}\right\}  ^{1/q}\leqslant cR_{j}^{\left(
d-1\right)  /2}j^{-1}+O\left(  R_{j}^{\left(  d-3\right)  /2}\right)
\leqslant CR_{j}^{\left(  d-1\right)  /2}j^{-1}.
\]
Finally, letting $j\rightarrow+\infty$ we obtain%
\begin{align*}
&  \liminf_{R\rightarrow+\infty}\left\{  R^{-\frac{d-1}{2}}\left\Vert
\mathcal{D}\left(  R\Omega-x\right)  \right\Vert _{L^{p}\left(  \mathbb{T}%
^{d}\right)  }\right\} 
 \leqslant\liminf_{R\rightarrow+\infty}\left\{  R^{-\frac{d-1}{2}}\left\{
\sum_{n\neq0}\left\vert R^{d}\widehat{\chi_{\Omega}}\left(  Rn\right)
\right\vert ^{q}\right\}  ^{1/q}\right\}  =0.
\end{align*}
More precisely, if $\varphi\left(  t\right)  =t^{\beta d}\log\left(  t\right)
$ then one can prove that, for large $s$,%
\[
\varphi^{-1}\left(  s\right)  \approx\left(  s\frac{\beta d}{\log\left(
s\right)  }\right)  ^{1/\beta d}.
\]
This implies that if $R_{j}\leqslant j^{Cj^{\beta d}}$ then%
\[
j^{-1}\leqslant C\left(  \frac{\log\left(  R_{j}\right)  }{\log\left(
\log\left(  R_{j}\right)  \right)  }\right)  ^{-1/\beta d}.%
\]
Therefore%
\[
\left\{  \sum_{m\neq0}\left\vert R_{j}^{d}\widehat{\chi_{\Omega}}\left(
R_{j}n\right)  \right\vert ^{q}\right\}  ^{1/q}\leqslant C\,R_{j}^{\left(
d-1\right)  /2}\left(  \frac{\log\left(  R_{j}\right)  }{\log\left(
\log\left(  R_{j}\right)  \right)  }\right)  ^{-1/\beta d}.
\]
\end{proof}

\bigskip

As we said, $\left\vert \widehat{\chi_{\Omega}}(\xi)\right\vert
\leqslant C\left\vert \xi\right\vert ^{-\left(  d+1\right)  /2}$ whenever
$\Omega$ has smooth boundary with positive Gaussian curvature. However, for
domains in the plane this smoothness assumption can be relaxed. Consider a
convex body $\Omega$ which can roll unimpeded inside a disc $\Delta$. This
means that for any point $x$ on the boundary $\partial\Delta$ there is a
translated copy of $\Omega$ contained in $\Delta$ that touches $\partial
\Delta$ in $x$.

\begin{figure}[h!]
\centering
\begin{tikzpicture}[line cap=round,line join=round,>=stealth,scale=0.35]
\filldraw [fill=green!25,fill opacity=0.5] plot[shift={(11.5358983849,-1.)},line width=1.pt,domain=-0.523598775598:0.523598775598,variable=\t]({1.*4.*cos(\t r)+0.*4.*sin(\t r)},{0.*4.*cos(\t r)+1.*4.*sin(\t r)}) --
plot[shift={(15.,-3.)},line width=1.pt,domain=1.57079632679:2.61799387799,variable=\t]({1.*4.*cos(\t r)+0.*4.*sin(\t r)},{0.*4.*cos(\t r)+1.*4.*sin(\t r)}) --
plot[shift={(15.,1.)},line width=1.pt,domain=3.66519142919:4.71238898038,variable=\t]({1.*4.*cos(\t r)+0.*4.*sin(\t r)},{0.*4.*cos(\t r)+1.*4.*sin(\t r)});
\draw [line width=1.pt] (11.,-3.) circle (5.65685424949cm);
\draw [line width=0.5pt,dash pattern=on 3pt off 3pt] (11.2824898206,2.64979641237)-- (16.6519159808,-2.7636829555);
\draw [->,line width=1.pt] (5.,0.)-- (9.,4.);
\draw [<->,line width=0.2pt,dash pattern=on 1pt off 1pt] (15,1) -- (13.96,-0.04);
\draw (12.662833788,-0.728652444091) node[anchor=north west] {$\Omega$};
\draw (8,-3) node[anchor=north west] {$\Delta$};
\draw (5.97500419166,3.09871991528) node[anchor=north west] {$\vartheta$};
\draw (14.1,.8) node[anchor=north west] {$\scriptsize{\delta}$};
\end{tikzpicture}
\end{figure}

\begin{theorem}
\textit{If a planar convex set }$\Omega$\textit{\ can roll unimpeded inside a
disc, then}%
\[
\left\vert \widehat{\chi_{\Omega}}(\xi)\right\vert \leqslant C\left\vert
\xi\right\vert ^{-3/2}.
\]

\end{theorem}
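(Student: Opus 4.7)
The plan is to represent $\widehat{\chi_\Omega}(\xi)$ as an oscillatory boundary integral and estimate it by a van der Corput / stationary phase argument, using the rolling condition as a uniform lower bound on the curvature of $\partial\Omega$. By the divergence theorem,
\[
\widehat{\chi_\Omega}(\xi) = \frac{-1}{2\pi i|\xi|^{2}}\int_{\partial\Omega}\bigl(\xi\cdot\nu(x)\bigr)e^{-2\pi i\xi\cdot x}\,ds(x),
\]
so everything reduces to the boundary oscillatory integral on the right. I would parametrize $\partial\Omega$ by arc length $s\in[0,L]$, writing $\gamma(s)$ for the position, $\tau(s)=\gamma'(s)$ for the unit tangent, and $\vartheta(s)$ for the angle of the outer unit normal $\nu(s)$.

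The geometric meaning of the rolling condition is that through every $p\in\partial\Omega$ there passes a disc of radius $r$ containing $\Omega$ and tangent to $\partial\Omega$ at $p$ (namely, a translate of $\Delta$ obtained from the rolling configuration). Hence $\partial\Omega$ must curve at least as sharply as such a circle at every point, which amounts to saying that $\vartheta$ is non-decreasing and $\vartheta(s_{2})-\vartheta(s_{1})\geqslant(s_{2}-s_{1})/r$ for $s_{1}\leqslant s_{2}$. Equivalently, the inverse function $s(\vartheta)$ is Lipschitz with constant $r$ and every direction is realized as an outer normal of $\partial\Omega$.

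Write $\xi=|\xi|(\cos\varphi,\sin\varphi)$. The phase $\psi(s)=-2\pi\xi\cdot\gamma(s)$ has derivative $\psi'(s)=-2\pi|\xi|\sin\bigl(\vartheta(s)-\varphi\bigr)$, so its stationary points are the two parameters where $\vartheta(s)\equiv\varphi\pmod{\pi}$. For a parameter $\delta>0$ I would split $[0,L]$ into the \emph{near set} $N_{\delta}$, where $\vartheta(s)$ lies within $\delta$ of $\varphi$ or of $\varphi+\pi$, and its complement. By the Lipschitz bound on $s(\vartheta)$ the arc-length measure of $N_{\delta}$ is at most $4r\delta$, so the trivial estimate on the boundary integral over $N_{\delta}$ is $4r\delta|\xi|$, producing an $O(\delta/|\xi|)$ contribution to $\widehat{\chi_\Omega}$. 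Outside $N_{\delta}$ one has $|\psi'(s)|\geqslant c|\xi|\delta$, and on each of the at most four maximal subintervals on which $\vartheta$ keeps $\sin(\vartheta-\varphi)$ monotone, $\psi'$ itself is monotone; the first-derivative van der Corput lemma applied to each piece, together with the total variation bound on the amplitude $\xi\cdot\nu$, yields an $O(1/\delta)$ bound for the far part of the boundary integral, i.e. $O(1/(|\xi|^{2}\delta))$ for $\widehat{\chi_\Omega}$. Choosing $\delta=|\xi|^{-1/2}$ balances the two contributions and gives $|\widehat{\chi_\Omega}(\xi)|\leqslant C|\xi|^{-3/2}$.

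The main technical obstacle is that the rolling condition alone does not yield $C^{2}$ regularity of $\partial\Omega$, so $\psi$ need not be $C^{2}$ and van der Corput cannot be invoked verbatim. I would handle this by mollifying the support function: for a non-negative smooth mollifier $\phi_{\varepsilon}$ on $S^{1}$, the convolution $h_{\varepsilon}=h_{\Omega}\ast\phi_{\varepsilon}$ is smooth and satisfies $h_{\varepsilon}+h_{\varepsilon}''=(h_{\Omega}+h_{\Omega}'')\ast\phi_{\varepsilon}\in[0,r]$, so it is the support function of a smooth convex body $\Omega_{\varepsilon}$ which also rolls inside a disc of radius $r$. The argument above applied to $\Omega_{\varepsilon}$ gives a bound $|\widehat{\chi_{\Omega_{\varepsilon}}}(\xi)|\leqslant C|\xi|^{-3/2}$ with $C$ depending only on $r$ and on the diameter of $\Omega$, and $\widehat{\chi_{\Omega_{\varepsilon}}}\to\widehat{\chi_{\Omega}}$ uniformly on compact sets as $\varepsilon\to 0^{+}$, yielding the conclusion for $\Omega$.
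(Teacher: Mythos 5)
Your argument is correct, but it follows a genuinely different route from the paper. You estimate the boundary oscillatory integral coming from the divergence theorem directly: the rolling condition is converted into the one-sided curvature bound $\vartheta(s_{2})-\vartheta(s_{1})\geqslant (s_{2}-s_{1})/r$ (equivalently, $s(\vartheta)$ is $r$-Lipschitz), the stationary directions are removed by an angular cutoff of width $\delta$, the near part is controlled by the measure bound $|N_{\delta}|\leqslant 4r\delta$, the far part by the first-derivative van der Corput test on the finitely many pieces where $\psi'$ is monotone, and the choice $\delta=|\xi|^{-1/2}$ balances the two contributions. The paper instead quotes Podkorytov's inequality, which bounds $|\widehat{\chi_{\Omega}}(\rho\vartheta)|$ by $\rho^{-1}$ times the lengths of the chords of $\Omega$ at depth $(2\rho)^{-1}$ in the directions $\pm\vartheta$, and then only needs the one-line geometric comparison $\lambda(\delta,\vartheta,\Omega)\leqslant\lambda(\delta,\vartheta,\Delta)\leqslant C\sqrt{r\delta}$; all the oscillatory-integral work is hidden in the cited result, which holds for arbitrary planar convex bodies, so no regularity issue ever arises. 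Your proof is self-contained and makes explicit where the square-root gain comes from; the paper's is shorter and regularity-free. Two remarks. First, your mollification is not actually needed: the first-derivative van der Corput test requires only that $\psi'$ be monotone on each piece, which follows from the monotonicity of $\vartheta(s)$ (convexity) with the integration by parts carried out in the Stieltjes sense, and the divergence-theorem identity already holds for the Lipschitz boundary of a convex body; that said, the smoothing argument you give is itself sound, since $h_{\varepsilon}+h_{\varepsilon}''=(h_{\Omega}+h_{\Omega}'')\ast\phi_{\varepsilon}\in[0,r]$ and $|\Omega_{\varepsilon}\,\triangle\,\Omega|\to 0$ gives uniform convergence of the Fourier transforms. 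Second, like the paper you interpret ``rolls unimpeded'' through the Bonnesen--Fenchel characterization (every boundary point of $\Omega$ admits a tangent disc of radius $r$ containing $\Omega$, i.e.\ curvature at least $1/r$ in the generalized sense), not through the literal translates-only definition stated before the theorem; that is the intended meaning, and it is exactly what both your bound on $|N_{\delta}|$ and the paper's chord comparison require.
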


\begin{proof}
In \cite{P} (see also \cite{B-R-T, T}) it is proved that if
\[
\lambda\left(  \delta,\vartheta,\Omega\right)  =\left\vert \left\{  x\in
\Omega:\delta+x\cdot\vartheta=\sup_{y\in\Omega}\left\{  y\cdot\vartheta
\right\}  \right\}  \right\vert 
\]
(this is the length of the chord perpendicular to the outward direction
$\vartheta$ and at a small distance $\delta$ from the boundary $\partial
\Omega$), then
\[
\left\vert \widehat{\chi_{\Omega}}(\rho\vartheta)\right\vert \leqslant
\frac{\mathrm{diameter}(\Omega)}{2\rho}\left(  \lambda\left(  (2\rho)^{-1},\vartheta,\Omega\right)
+\lambda\left(  (2\rho)^{-1},-\vartheta,\Omega\right)  \right)  .
\]
If $\Omega$ can roll unimpeded inside a disc $\Delta$, then $\lambda\left(
\delta,\vartheta,\Omega\right)  \leqslant\lambda\left(  \delta,\vartheta
,\Delta\right)  $. This implies that the Fourier transform of $\Omega$ is
dominated by the chords of a disc, and therefore $\left\vert \widehat{\chi
_{\Omega}}(\xi)\right\vert \leqslant C\left\vert \xi\right\vert ^{-3/2}$.
\end{proof}

A curve can roll unimpeded inside another curve if and only if the largest
radius of curvature of the first is smaller than the smallest radius of
curvature of the second. No smoothness of these curves is required, the
rolling curve may also have corners. See \cite[Chapter 17]{B-F} and the
references therein.

In particular, the above results give an alternative proof of the result in
\cite{Hu}.

\begin{corollary}
\textit{Let }$\Omega$ be \textit{a planar convex set that can roll unimpeded
inside a disc. For any }$R>2$ we have%
\[
\left\{  \int_{\mathbb{T}^{2}}\left\vert \mathcal{D}\left(  R\Omega-x\right)
\right\vert ^{4}dx\right\}  ^{1/4}\leqslant CR^{1/2}\log^{1/4}\left(
R\right)  .
\]

\end{corollary}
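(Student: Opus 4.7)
The plan is to derive this Corollary as an immediate specialization of Corollary \ref{Corollary3}, using the preceding theorem to verify the required Fourier-decay hypothesis. The geometric assumption that $\Omega$ rolls unimpeded inside a disc is precisely what is needed to make the Fourier transform bound available without invoking smoothness or strict positivity of curvature in the classical sense.

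First, I would apply the previous theorem to the planar convex set $\Omega$ to obtain the pointwise estimate
\[
|\widehat{\chi_{\Omega}}(\xi)| \leqslant C |\xi|^{-3/2}.
\]
Note that $3/2 = (d+1)/2$ with $d=2$, so $\Omega$ satisfies the hypothesis of Corollary \ref{Corollary3} in dimension $d=2$.

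Next I would specialize the parameters in Corollary \ref{Corollary3}(3). The threshold exponent there is $p = 2d/(d-1)$, which for $d=2$ becomes $p=4$, and the logarithmic exponent is $(d-1)/(2d) = 1/4$. Inserting these into part (3) of that corollary gives directly
\[
\left\{\int_{\mathbb{T}^{2}} |\mathcal{D}(R\Omega - x)|^{4}\, dx\right\}^{1/4} \leqslant C R^{1/2} \log^{1/4}(R)
\]
for all $R>2$, which is the claim.

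There is essentially no obstacle to overcome here: the work has been done in the preceding theorem (replacing smooth positive curvature by the rolling condition) and in Corollary \ref{Corollary3}(3) (where the critical logarithmic endpoint was extracted via Lemma \ref{Key Lemma} applied to the Hausdorff–Young/Marcinkiewicz bound of Theorem \ref{Thm Estimate}(3)). The only thing to verify is the numerical matching $d=2$, $p=4$, $(d+1)/2 = 3/2$, and $(d-1)/(2d) = 1/4$, after which the result of Huxley follows as an immediate corollary of the general machinery developed above.
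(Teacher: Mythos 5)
Your proposal is correct and matches the paper's intended argument exactly: the rolling theorem supplies the decay $|\widehat{\chi_{\Omega}}(\xi)|\leqslant C|\xi|^{-3/2}$, and Corollary \ref{Corollary3}(3) with $d=2$, $p=2d/(d-1)=4$ and logarithmic exponent $(d-1)/(2d)=1/4$ yields the stated bound. Nothing further is needed.
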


\end{document}